\documentclass{amsart}

\usepackage[foot]{amsaddr}
\usepackage{lmodern}
\usepackage[T1]{fontenc}
\usepackage[utf8]{inputenc}

\usepackage{overpic}
\usepackage{todonotes}

\usepackage{amsmath,amssymb,amsfonts,amsthm}
\usepackage[nobysame]{amsrefs}
\usepackage{mathtools}
\usepackage{xspace}

\newtheorem{theorem}{Theorem}

\newtheorem{lemma}{Lemma}
\theoremstyle{definition}
\newtheorem{definition}{Definition}
\theoremstyle{remark}
\newtheorem{example}{Example}
\newtheorem{remark}{Remark}

\usepackage{hyperref}
\hypersetup{
  pdfauthor={Bahar Kalkan, Zijia Li, Hans-Peter Schröcker, Johannes Siegele},
  pdftitle={The Study Variety of Conformal Kinematics},
  pdfkeywords={Simple motion, Study variety, Study quadric, Null quadric, Four quaternion representation, Factorization},
  colorlinks=true,
  allcolors=blue,
}

\renewcommand{\H}{\mathbb{H}}
\renewcommand{\P}{\mathbb{P}}
\newcommand{\R}{\mathbb{R}}
\renewcommand{\DH}{\mathbb{DH}}
\newcommand{\NC}{\mathcal{C}}
\newcommand{\SQ}{\mathcal{Q}}
\newcommand{\SV}{\mathcal{S}}
\newcommand{\NQ}{\mathcal{N}}
\newcommand{\SC}{S}

\newcommand{\CGA}{\ensuremath{\operatorname{CGA}}\xspace}
\newcommand{\CGAp}{\ensuremath{\CGA_+}\xspace}
\newcommand{\OO}[1][3]{\ensuremath{\operatorname{O}(#1)}\xspace}
\newcommand{\SO}[1][3]{\ensuremath{\operatorname{SO}(#1)}\xspace}
\newcommand{\SE}[1][3]{\ensuremath{\operatorname{SE}(#1)}\xspace}
\newcommand{\Sim}[1][3]{\ensuremath{\operatorname{Sim}(#1)}\xspace}
\newcommand{\Em}[1][3]{\ensuremath{\operatorname{E}^{-}(#1)}\xspace}
\newcommand{\reverse}[1]{\widetilde{#1}}
\newcommand{\qi}{\mathbf{i}}
\newcommand{\qj}{\mathbf{j}}
\newcommand{\qk}{\mathbf{k}}
\newcommand{\eps}{\varepsilon}

\DeclareMathOperator{\Vect}{v}
\DeclareMathOperator{\Scal}{s}

\title[]{The Study Variety of Conformal Kinematics}

\date{\today}

\author{Bahar Kalkan}
\address{Van Yuzuncu Yil University, Department of Mathematics, Van, Turkey}
\email{baharkalkan@yyu.edu.tr}

\author{Zijia Li}
\address{KLMM, Academy of Mathematics and Systems Science,
  Chinese Academy of Sciences, Beijing, China}
\email{lizijia@amss.ac.cn}

\author{Hans-Peter Schröcker}
\address{Department of Basic Sciences in Engineering, University of Innsbruck,
  Innsbruck, Austria}
\email{hans-peter.schroecker@uibk.ac.at}

\author{Johannes Siegele}
\address{Department of Basic Sciences in Engineering, University of Innsbruck,
  Innsbruck, Austria}
\email{johannes.siegele@uibk.ac.at}

\keywords{
  Simple motion,
  Study variety,
  Study quadric,
  Null quadric,
  Four quaternion representation,
  Factorization}
\subjclass[2010]{
  15A66, 15A67, 20G20, 51B10, 51F15}

\begin{document}

\begin{abstract}
  We introduce the Study variety of conformal kinematics and investigate some of
  its properties. The Study variety is a projective variety of dimension ten and
  degree twelve in real projective space of dimension 15, and it generalizes the
  well-known Study quadric model of rigid body kinematics. Despite its high
  dimension, co-dimension, and degree it is amenable to concrete calculations
  via conformal geometric algebra (CGA) associated to three-dimensional
  Euclidean space. Calculations are facilitated by a four quaternion
  representation which extends the dual quaternion description of rigid body
  kinematics. In particular, we study straight lines on the Study variety. It
  turns out that they are related to a class of one-parametric conformal motions
  introduced by L.~Dorst in 2016. Similar to rigid body kinematics, straight
  lines (that is, Dorst's motions) are important for the decomposition of
  rational conformal motions into lower degree motions via the factorization of
  certain polynomials with coefficients in CGA.
\end{abstract}

\maketitle

\section{Introduction}
\label{sec:introduction}

Conformal Geometric Algebra (\CGA) is a commonly used algebra for describing
conformal displacements \cite{bayro-corrochano19,dorst07}. This paper is
dedicated to the study of conformal kinematics of three-space from an algebraic
and geometric viewpoint. Our aim is to describe a construction of a point model
of conformal kinematics, similar to the well-known Study quadric model of rigid
body kinematics, c.f. \cite[Chapter~11]{selig05}, to investigate some of its
properties, and to relate it to existing knowledge on \CGA. The Study quadric
model is based on the representation of the group \SE of rigid body
displacements by dual quaternions $\DH$. Rigid body displacements correspond to
points of a quadric $\SQ$ in $\P(\DH) = \P^7(\R)$, the \emph{Study quadric.} Its
equation is given by the condition that the dual quaternion norm is real. We
will define the \emph{Study variety $\SV$} of conformal kinematics as subvariety
in the projective space $\P^{15}(\R)$ over the even sub-algebra \CGAp of
Conformal Geometric Algebra of three-dimensional Euclidean geometry. Its ideal
encodes the well-known spinor conditions, which demand that the products of an
element $r \in \CGAp$ with its reverse from the left or from the right be real.
Our treatment is based on a \emph{four quaternion representation} for elements
of the 16-dimensional algebra \CGAp which allows transparent computations and
highlights the similarities to the ``two quaternion representation'' of \SE via
dual quaternions.

The group \SE of rigid body displacements is of high importance in robotics and
mechanism science and, because the dual quaternions $\DH$ form a sub-algebra of
$\CGAp$, is naturally embedded into conformal kinematics. While Clifford algebra
approaches to rigid body kinematics are, of course, well-known, c.f.
\cite{selig00} or \cite[Chapter~7]{bayro-corrochano19}, we observe that some
typical approaches to problems of robotics or mechanism science via $\DH$ have
not yet been generalized to conformal kinematics. Examples include the geometry
of the Study quadric and its relation to space kinematics in the sense of
\cite{selig05,rad18,lubbes18}, the study of constraint varieties
\cite{selig12,schroecker18,djintelbe21}, and the factorization theory of motion
polynomials \cite{hegedus13:_factorization2,li19:_motion_polynomials}. We feel
that it is worth extending or generalizing these concepts to \CGAp. This not
only teaches us about rigid body kinematics but also adds value to \CGAp, it
provides structure and new insight.

In particular, an important motivation when writing this text is to prepare the
ground for a factorization theory of ``spinor polynomials'', a conformal
generalization of motion polynomials. On the kinematics level, the
factorization corresponds to a decomposition of conformal motions with rational
trajectories into the product of motions of lower degrees. The basic building
blocks are linear polynomials. They correspond to straight lines on the Study
variety $\SV$ and, as we shall see in Section~\ref{sec:straight-lines}, they
parametrize a class of motions introduced by L.~Dorst in \cite{dorst16} as the
exponential of 2-blades (the wedge product of two vectors).

After recalling some well-known facts about the conformal geometric algebra
associated to 3D space in Section~\ref{sec:cga}, the subsequent
Section~\ref{sec:study-variety} features an investigation of the Study variety
$\SV$ based on a four quaternion representation of \CGAp. There we also
introduce the null quadric $\NQ$, a quadric in $\P^{15}$ which we consider
important for the geometric explanation of certain kinematic phenomena. One
instance of this can be found in Section~\ref{sec:straight-lines} where we
characterize straight lines through the identity on $\SV$, proof that they are
precisely the elementary motions investigated in \cite{dorst16}, and relate the
intersection points with $\NQ$ to Dorst's generation from two vectors. Combining
the concepts and methods developed in previous Sections with standard
factorization algorithms for Clifford algebras \cite{li18:_clifford_algebras}
already allows computing multiple decompositions of conformal motions given by
generic spinor polynomials into products of simple motions. We provide one
example in Section~\ref{sec:outlook}.

\section{Conformal Geometric Algebra and Dual Quaternions}
\label{sec:cga}

We follow the conventions of \cite[Chapter~8]{bayro-corrochano19} or
\cite{sommer2013geometric} for constructing the conformal model of
$n$-dimensional Euclidean space but we specialize to the case $n = 3$
immediately. We pick an orthonormal basis $\{e_1,e_2,e_3,e_+,e_-\}$ of
$\R^{4,1}$ with the properties
\begin{equation*}
  e_1^2 = e_2^2 = e_3^2 = e_+^2 = 1,\quad
  e_-^2 = -1.
\end{equation*}
The geometric product of two basis vectors is defined to be anticommutative:
\begin{equation*}
  e_ie_j = -e_je_i
  \quad\text{for pairwise different}\quad
  i,j \in \{1,2,3,+,-\}.
\end{equation*}
By linear extension, it generates the real algebra \CGA. It is customary, to
replace $e_+$, $e_-$ by $e_0$, $e_\infty$ via
\begin{equation*}
  e_o = \tfrac{1}{2}(e_- - e_+),\quad
  e_\infty = e_- + e_+
\end{equation*}
and to write $e_{ij}$ for $e_ie_j$, $e_{ijo\infty}$ for $e_ie_je_oe_\infty$ etc.
The \emph{reverse} $\reverse{e}_\ell$ of $e_\ell$ is obtained by inverting the
order of elements in $\ell$, that is, $\reverse{e}_{\ell_1 \cdots \ell_n} =
e_{\ell_n \cdots \ell_1}$. The \emph{grade} of the basis element $e_\ell$ with
$\ell \subset \{1,2,3,o,\infty\}$ is the cardinality of~$\ell$.

A general element of \CGA can be written as
\begin{equation}\label{eq:99}
  a = \sum_{\ell \subset \{1,2,3,o,\infty\}} a_\ell e_\ell,
  \quad\text{where $a_\ell \in \R$.}
\end{equation}
Its reverse is $\reverse{a} = \sum a_\ell\reverse{e}_\ell$. The even subalgebra
\CGAp of \CGA consists of all real linear combinations of even grade basis
elements. If all basis elements in \eqref{eq:99} are of grade one, $a$ is called
a \emph{vector}. The scalar product of two vectors is precisely the scalar
product in $\mathbb{R}^{4,1}$. Later, we will also need the wedge product of two
vectors, which is defined as $a \wedge b \coloneqq ab - a \cdot b$. Note that $a
\wedge a = 0$. A Euclidean point $(x_1,x_2,x_3)$ is represented in \CGA as the
vector
\begin{equation}\label{eq:point}
  x=e_0+x_1e_1+x_2e_2+x_3e_3 +\frac{x_1^2+x_2^2+x_3^2}{2} e_\infty,
\end{equation}
the point at infinity is represented by $e_\infty$. Vectors
$n_1e_1+n_2e_2+n_3e_3+de_\infty$ where the coefficient of $e_o$ is zero
represent planes with normal $n=(n_1,n_2,n_3)$ and oriented distance $d/||n||$
to the origin. All other vectors $a=\alpha(e_0+m_1e_1+m_2e_2+m_3e_3 +
1/2(m_1^2+m_2^2+m_3^2-\sigma) e_\infty)$ in \CGA represent spheres with midpoint
$(m_1,m_2,m_3)\in\R^3$ and radius
$r=\sqrt{\sigma}=\sqrt{a\reverse{a}/\alpha^2}$. If $\sigma$ is negative, the
sphere has an imaginary radius. It is natural to view spheres with zero radius as
Euclidean points with spheres of radius zero, that is, real multiples of vectors
in \eqref{eq:point}.

A rotor is often defined as an element of \CGAp satisfying $a\reverse{a} =
\reverse{a}{a} = 1$. Rotors form a double-cover of the group of direct conformal
displacements of conformally closed three-dimensional Euclidean space $\R^3 \cup
\{\infty\}$ which is isomorphic to $\SO[4,1]$
(c.f.~\cite[Chapter~8.4]{bayro-corrochano19}). We relax the rotor condition to
$a\reverse{a} = \reverse{a}a = \pm 1$, thus arriving at the algebra's spin
group, a double-cover of the group of conformal displacements which itself is
isomorphic to~$\OO[4,1]$. In order to get rid of the representation ambiguities,
we consider the spin group modulo the real multiplicative group $\R^\times$. We
call its elements \emph{homogeneous spinors.}

A homogeneous spinor is represented by an element $a \in \CGAp$ which satisfies
\begin{equation}
  \label{eq:1}
  a\reverse{a} = \reverse{a}a \in \R \setminus \{0\}
\end{equation}
This is our preferred interpretation as it gives rise to a model of conformal
kinematics as projective variety. As the representation of homogeneous spinors
is only unique up to non-zero scalar multiples, we can interpret homogeneous
spinors as points in $\CGAp$ modulo the real multiplicative group $\R^\times$.
This turns the \emph{vector space} \CGAp into the \emph{projective space}
$\P(\CGA_+)$. As the group of even-graded elements $\CGA_+$ is of dimension $16$
as a real vector space, we have $\P(\CGA_+) = \P^{15}(\R)$. The conformal group
$\OO[4,1]$ is embedded as a subvariety given by the equations arising from
$a\reverse{a} = \reverse{a}a \in \R$ minus the variety given by $a\reverse{a} =
\reverse{a}a = 0$. If $a \in \CGAp$ satisfies \eqref{eq:1}, we call
$a\reverse{a} = \reverse{a}a$ the \emph{norm of $a$.} As usual, equivalence
classes of non-zero scalar multiples are denoted by square brackets, i.e., $[a]
= [-a] = [2a]$ or $[1] = \R \setminus \{0\}$ etc. (Beware, this notation does
not distinguish between the equivalence class $[1]$ and a reference to a
bibliography item! Context will tell what is meant.)

Above construction is similar to the description of rigid body kinematics via
dual quaternions $\DH$ (c.f. \cite{husty12} or \cite[Section~9.3]{selig05}). The
real algebra of dual quaternions is generated from the basis elements
\begin{equation*}
  \qi,\
  \qj,\
  \qk,\
  \eps,\
  \qi\eps = \eps\qi,\
  \qj\eps = \eps\qj,\
  \qk\eps = \eps\qk
\end{equation*}
by the relations
\begin{equation*}
  \qi^2 = \qj^2 = \qk^2 = \qi\qj\qk = -1\quad\text{and}\quad \eps^2 = 0.
\end{equation*}
It is easy to verify that the dual quaternions are contained in $\CGA_+$ by the identifications
\begin{equation}
  \label{eq:2}
  \qi \mapsto -e_{23},\quad
  \qj \mapsto e_{13},\quad
  \qk \mapsto -e_{12},\quad\text{and}\quad
  \eps \mapsto e_{123\infty}.
\end{equation}
The norm of the dual quaternion $a = a_0 + a_1\qi + a_2\qj + a_3\qk + \eps(a_4 +
a_5\qi + a_6\qj + a_7\qk)$ is
\begin{equation*}
  a\reverse{a} =
  \reverse{a}a =
  a_0^2 + a_1^2 + a_2^2 + a_3^2 + 2\eps(a_0a_4 + a_1a_5 + a_2a_6 + a_3a_7)
\end{equation*}
and the real norm condition reduces to
\begin{equation}
  \label{eq:3}
  a_0a_4 + a_1a_5 + a_2a_6 + a_3a_7 = 0.
\end{equation}
The group of dual quaternions of non-zero norm modulo $\R^\times$ is isomorphic
to $\SE$. Its elements can be mapped bijectively onto the points of the
\emph{Study quadric} $\SQ \subset \P^7(\R)$ given by Equation~\eqref{eq:3} minus
the points of the \emph{null cone} $\NC \subset \P^7(\R)$ given by the vanishing
condition of the norm's real part
\begin{equation*}
  a_0^2 + a_1^2 + a_2^2 + a_3^2 = 0.
\end{equation*}

As explained in Section~\ref{sec:introduction}, this construction is relevant
for the geometry and algebra of $\SE$ and has applications in mechanism science
and robotics. In the next section, we extend it to the conformal group.

\section{Study Variety and Null Quadric}
\label{sec:study-variety}

The counterpart of the Study quadric $\SQ \subset \P^7$ in conformal kinematics
is the \emph{Study variety $\SV$,} a ten-dimensional projective variety in
$\P^{15}$ of degree twelve. It is complemented with the \emph{null quadric
  $\NQ$} which, in contrast to the null cone of the dual quaternion model of
\SE, is a regular quadric. This is quite important, as $SV \setminus \NQ$
consists of two disjoint components that can be thought of as direct and
indirect (orientation preserving and orientation reversing) conformal
displacements. For the sake of a transparent derivation of equations we suggest
a \emph{four quaternion representation} of \CGAp. It naturally extends the dual
(``two'') quaternion representation of~$\SE$.

\subsection{Four Quaternion Representation}

We distribute the 16 real coordinates of \CGAp into four groups of four
coordinates. The first group contains the coefficients of $1$, $-e_{23}$,
$e_{13}$, and $-e_{12}$, that is, it is a quaternion $r_0$ in the sense of
\eqref{eq:2}. The second group contains the coefficients of basis elements whose
index set contains $\infty$ but not $o$. With $\eps_1 \coloneqq e_{123\infty}$,
they are
\begin{equation*}
  \eps_1      = e_{123\infty},\quad
  \qi \eps_1  = -e_{23} \eps_1 = e_{1\infty},\quad
  \qj \eps_1  = e_{13} \eps_1 = e_{2\infty},\quad
  \qk \eps_1  = -e_{12} \eps_1 = e_{3\infty}.
\end{equation*}
We write the second group as $r_1\eps_1$ with a quaternion $r_1$. Note that
$\eps_1$ here equals the dual unit $\eps$ in \eqref{eq:2} so that $r_1$ is the
dual part of a dual quaternion. The third group contains the coefficients of
basis elements whose index set contains $o$ but not $\infty$. With $\eps_2
\coloneqq e_{123o}$ they are
\begin{equation*}
    \eps_2 = e_{123o},\quad
    \qi \eps_2 = -e_{23} \eps_2 = e_{1o},\quad
    \qj \eps_2 = e_{13} \eps_2 = e_{2o},\quad
    \qk \eps_2 = -e_{12} \eps_2 = e_{3o}.
\end{equation*}
We write this as $r_2\eps_2$ with a quaternion $r_2$. The fourth group contains
the remaining coefficients. Their respective index sets contain both, $\infty$
and $o$:
\begin{gather*}
    \eps_1\eps_2 = e_{\infty o},\quad
    \qi \eps_1\eps_2 = -e_{23} \eps_1\eps_2 = -e_{23\infty o},\\
    \qj \eps_1\eps_2 = e_{13} \eps_1\eps_2 = e_{13\infty o},\quad
    \qk \eps_1\eps_2 = -e_{12} \eps_1\eps_2 = -e_{12\infty o}.
\end{gather*}
We write this as $r_3\eps_1\eps_2$ with a quaternion~$r_3$.

Now, an element $q \in \CGAp$ has a unique representation as $q = r_0 +
r_1\eps_1 + r_2\eps_2 + r_3\eps_1\eps_2$ with quaternions $r_0$, $r_1$, $r_2$,
$r_3$. Note that $\reverse{\eps_1} = \eps_1$, $\reverse{\eps_2} = \eps_2$ but
$\reverse{\eps_1\eps_2} = -2 - \eps_1\eps_2$. In order to further simplify
reversion, we make the change of basis $\eps_1\eps_2 \mapsto \eps_3$ where
\begin{equation*}
  \eps_3 \coloneqq \eps_1\eps_2 + 1
\end{equation*}
so that $\reverse{\eps_3} = -\eps_3$. Note that $\eps_3 =
e_\infty\wedge e_o$, while $\eps_1\eps_2 = e_{\infty o}$. Again, there exist
unique quaternions $q_0 = r_0 - r_3$, $q_1 = r_1$, $q_2 = r_2$, $q_3 = r_3$ such
that $q = q_0 + q_1\eps_1 + q_2\eps_2 + q_3\eps_3$. This we call the \emph{four
  quaternion representation} of $q$.

Reversion in the four quaternion representation reads as
\begin{equation}
  \label{eq:4}
  \reverse{q} = \reverse{q_0} + \reverse{q_1}\eps_1 + \reverse{q_2}\eps_2 - \reverse{q_3}\eps_3.
\end{equation}
The product of $p = p_0 + p_1\eps_1 + p_2\eps_2 + p_3\eps_3$ and $s = s_0 +
s_1\eps_1 + s_2\eps_2 + s_3\eps_3$ is easily inferred from the multiplication
Table~\ref{tab:eps-multiplication} for $\eps_1$, $\eps_2$, and $\eps_3$ and the
fact that quaternions commute with $\eps_1$, $\eps_2$, and $\eps_3$. We have
\begin{equation}
  \label{eq:5}
  \begin{aligned}
    ps &= (p_0s_0 - p_1s_2 - p_2s_1 + p_3s_3) \\
    &+ (p_1(s_0 + s_3) + (p_0 - p_3)s_1)\eps_1 \\
    &+ (p_2(s_0 - s_3) + (p_0 + p_3)s_2)\eps_2 \\
    &+ (p_0s_3 + p_1s_2 - p_2s_1 + p_3s_0)\eps_3.
  \end{aligned}
\end{equation}

\begin{table}
  \centering
  \caption{Multiplication table for $\eps_1$, $\eps_2$, and $\eps_3$.}
  \label{tab:eps-multiplication}
  \begin{tabular}{c|ccc}
             & $\eps_1$      & $\eps_2$     & $\eps_3$  \\
    \hline
    $\eps_1$ & $0$           & $\eps_3 - 1$ & $\eps_1$  \\
    $\eps_2$ & $-\eps_3 - 1$ & $0$          & $-\eps_2$ \\
    $\eps_3$ & $-\eps_1$     & $\eps_2$     & $1$
  \end{tabular}
\end{table}

\subsection{Ideal of the Study Variety}
\label{sec:ideal}

The Study quadric of rigid body displacements is the projective variety in $\P^7
= \P(\DH)$ defined by the condition \eqref{eq:3} that its points have real norm.
When generalizing this to \CGAp we have to take into account that $q\reverse{q}
\neq \reverse{q}q$ in general so that the proper definition reads:

\begin{definition}
  The Study variety $\SV$ of conformal kinematics is the projective variety in
  $\P^{15} = \P(\CGAp)$ defined by the condition that both $q\reverse{q}$ and
  $\reverse{q}q$ are real.
\end{definition}

A projective point $[q] \in \SV$ describes a conformal displacement unless
$q\reverse{q} = \reverse{q}q = 0$. We will have a closer look at the algebraic
and geometric implications of this in Section~\ref{sec:null-quadric}. The
generating equations of the Study variety are obtained by plugging $p = q$, $s =
\reverse{q}$ and $p = \reverse{q}$, $s = q$ into \eqref{eq:5}:
\begin{equation}
  \label{eq:6}
  \begin{aligned}
    q\reverse{q} &= (q_0\reverse{q}_0 - q_1\reverse{q}_2 - q_2\reverse{q}_1 - q_3\reverse{q}_3) \\
    &+ (q_1(\reverse{q}_0 - \reverse{q}_3) + (q_0 - q_3)\reverse{q}_1)\eps_1 \\
    &+ (q_2(\reverse{q}_0 + \reverse{q}_3) + (q_0 + q_3)\reverse{q}_2)\eps_2 \\
    &- (q_0\reverse{q}_3 - q_1\reverse{q}_2 + q_2\reverse{q}_1 - q_3\reverse{q}_0)\eps_3,
  \end{aligned}
  \quad
  \begin{aligned}
    \reverse{q}q &= (\reverse{q}_0q_0 - \reverse{q}_1q_2 - \reverse{q}_2q_1 - \reverse{q}_3q_3) \\
    &+ (\reverse{q}_1(q_0 + q_3) + (\reverse{q}_0 + \reverse{q}_3)q_1)\eps_1 \\
    &+ (\reverse{q}_2(q_0 - q_3) + (\reverse{q}_0 - \reverse{q}_3)q_2)\eps_2 \\
    &+ (\reverse{q}_0q_3 + \reverse{q}_1q_2 - \reverse{q}_2q_1 - \reverse{q}_3q_0)\eps_3.
  \end{aligned}
\end{equation}
In order to write this more succinctly, we define
\begin{equation}
  \label{eq:study_condition}
 \SC(f,g) \coloneqq
 f\reverse{g} + g\reverse{f},
\end{equation}
and $\Vect(f) \coloneqq \frac{1}{2}(f-\reverse{f})$, the \emph{vector part}, for
quaternions $f$, $g$. With $f = f_0 + f_1\qi + f_2\qj + f_3\qk$ and $g = g_0 +
g_1\qi + g_2\qj + g_3\qk$ we have $\SC(f,g) = 2(f_0g_0 + f_1g_1 + f_2g_2 +
f_3g_3)$ so that $\SC(f,g) = 0$ is actually the Study condition for the dual
quaternion $f + \eps g$, compare with \eqref{eq:3}. In particular, $\SC(f,g)$ is
bilinear in the real coefficients of $f$ and $g$. Moreover, we have $\SC(f,g) =
\SC(g,f) = \SC(\reverse{f},\reverse{g}) = \reverse{f}g + \reverse{g}f$. Using
this notation, we can re-write the equations in \eqref{eq:6} as
\begin{equation}
  \label{eq:7}
  \begin{aligned}
    q\reverse{q} &= (q_0\reverse{q}_0 - \SC(q_1,q_2) - q_3\reverse{q}_3) \\
    &+ (\SC(q_0,q_1) - \SC(q_1,q_3))\eps_1 \\
    &+ (\SC(q_0,q_2) + \SC(q_2,q_3))\eps_2 \\
    &- 2(\Vect(q_0\reverse{q_3}) - \Vect(q_1\reverse{q_2}))\eps_3,
  \end{aligned}
  \quad
  \begin{aligned}
    \reverse{q}q &= (\reverse{q}_0q_0 - \SC(q_1,q_2) - \reverse{q}_3q_3) \\
    &+ (\SC(q_0,q_1) + \SC(q_1,q_3))\eps_1 \\
    &+ (\SC(q_0,q_2) - \SC(q_2,q_3))\eps_2 \\
    &+ 2(\Vect(\reverse{q_0}q_3) + \Vect(\reverse{q_1}q_2))\eps_3.
  \end{aligned}
\end{equation}
Note that the coefficients of $\eps_1$ and $\eps_2$ are real while the
coefficients of $\eps_3$ are vectorial quaternions. The constant coefficients
are the same and real as well. Thus, the ideal $\mathcal{I}$ of the Study
variety is generated by the vanishing conditions of the coefficients of
$\eps_1$, $\eps_2$, and $\eps_3$. We will denote an ideal generated by
polynomials $f_1$, $f_2,\ldots, f_n$ by $\langle f_1,f_2,\ldots,f_n \rangle$. By
a slight abuse of this notation, we can define
\begin{multline}
  \label{eq:8}
  \mathcal{I} \coloneqq \langle
  \SC(q_0,q_1), \SC(q_1,q_3), \SC(q_0,q_2), \SC(q_2,q_3), \Vect(q_0\reverse{q_3}) - \Vect(q_1\reverse{q_2}),
  \Vect(\reverse{q_0}q_3) + \Vect(\reverse{q_1}q_2)
  \rangle.
\end{multline}
Note that the last two ``generators'' are vectorial and actually represent six
real polynomials. We can get rid of them by using identities like
\begin{equation*}
  2\Vect(q_0\reverse{q_3}) = \SC(q_0, \qi q_3)\qi + \SC(q_0, \qj q_3)\qj + \SC(q_0, \qk q_3)\qk,
\end{equation*}
and thus arriving at
\begin{multline}
  \label{eq:9}
  \mathcal{I} \coloneqq \langle
  \SC(q_0,q_1), \SC(q_1,q_3), \SC(q_0,q_2), \SC(q_2,q_3),\\
  \SC(q_0,\qi q_3)-\SC(q_1,\qi q_2),
  \SC(q_0,\qj q_3)-\SC(q_1,\qj q_2),
  \SC(q_0,\qk q_3)-\SC(q_1,\qk q_2),\\
  \SC(q_0,q_3\qi)+\SC(q_1,q_2\qi),
  \SC(q_0,q_3\qj)+\SC(q_1,q_2\qj),
  \SC(q_0,q_3\qk)+\SC(q_1,q_2\qk)
  \rangle.
\end{multline}
We see that the ideal $\mathcal{I}$ of the Study variety is generated by ten
bilinear polynomials. Using computer algebra software it can readily be verified
that none of them can be removed without enlarging the Study variety. The
Hilbert polynomial of $\mathcal{I}$ is
\begin{multline*}
  H(x) =
   \tfrac{1}{302400}x^{10}
  +\tfrac{1}{7560}x^9
  +\tfrac{47}{20160}x^8
  +\tfrac{1}{42}x^7
  +\tfrac{2243}{14400}x^6\\
  +\tfrac{49}{72}x^5
  +\tfrac{121279}{60480}x^4
  +\tfrac{2963}{756}x^3
  +\tfrac{121883}{25200}x^2
  +\tfrac{709}{210}x
  +1.
\end{multline*}
We read off that the projective dimension of the Study variety equals $\dim\SV =
\deg H(x) = 10$. As expected, this equals $\dim\SO[4,1] = 10$. Moreover, the
degree of $\SV$ as projective variety equals $\deg\SV = \frac{1}{302400}\deg
H(x)! = 12$.

The ideal \eqref{eq:8} of $\SV$ may seem unwieldy at first sight. Nonetheless,
it has sufficient structure to allow for actual computations. We will encounter
examples of this later in this text. We summarize our findings in

\begin{theorem}
  \label{th:study-variety}
  The Study variety $\SV \subset \P^{15}$ of conformal kinematics is given by
  the ideal \eqref{eq:9} which is generated by ten bilinear polynomials. It is a
  projective variety of dimension ten and degree twelve.
\end{theorem}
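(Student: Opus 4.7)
The strategy is to follow the derivation already laid out in the preceding subsection and then invoke a Hilbert polynomial computation for the numerical invariants.

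First I would start from the definition: $[q]\in\SV$ iff both $q\reverse{q}$ and $\reverse{q}q$ lie in $\R$. Using the four quaternion representation $q=q_0+q_1\eps_1+q_2\eps_2+q_3\eps_3$, I would substitute $p=q$, $s=\reverse{q}$ (and vice versa) into the multiplication formula \eqref{eq:5}, using \eqref{eq:4} for reversion, to obtain the expressions in \eqref{eq:6}. The defining ideal is then generated by the $\eps_1$-, $\eps_2$-, and $\eps_3$-coefficients of $q\reverse{q}$ and $\reverse{q}q$; the constant coefficients coincide and are automatically real, so they contribute nothing. Re-expressing the result in terms of $\SC(\cdot,\cdot)$ and $\Vect(\cdot)$ gives the condensed form \eqref{eq:8}.

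Next, I would convert the two vectorial generators in \eqref{eq:8} into six scalar generators, exploiting the identity
\begin{equation*}
  2\Vect(q_0\reverse{q_3}) = \SC(q_0,\qi q_3)\qi+\SC(q_0,\qj q_3)\qj+\SC(q_0,\qk q_3)\qk,
\end{equation*}
and its companion with $\reverse{q_0}q_3$ (and with $q_1,q_2$). Equating vector parts component-wise in the basis $\qi,\qj,\qk$ yields the six bilinear scalar polynomials appearing in \eqref{eq:9}. Together with the four relations $\SC(q_0,q_1)$, $\SC(q_1,q_3)$, $\SC(q_0,q_2)$, $\SC(q_2,q_3)$ this gives a system of exactly ten bilinear generators, each manifestly bilinear in the real coefficients of the $q_i$ because $\SC$ is bilinear. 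To see that none of the ten generators is redundant, I would run a standard Gröbner basis / ideal-membership test in computer algebra: for each generator $g_k$, check that $g_k$ is not contained in the ideal generated by the remaining nine. This is a purely mechanical verification.

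For the numerical invariants, I would compute the Hilbert polynomial $H(x)$ of $\mathcal{I}$ (e.g.\ via Macaulay2 or Singular), which is the degree-$10$ polynomial displayed just before the theorem. The projective dimension of $\SV$ equals $\deg H=10$, matching the expected $\dim\SO[4,1]=10$, and the degree equals $10!$ times the leading coefficient, giving $\deg\SV=10!/302400=12$. The main obstacle is not conceptual but computational: verifying minimality of the ten generators and certifying the Hilbert polynomial in a 16-variable polynomial ring is heavy, but entirely within reach of standard Gröbner basis packages, and no additional theoretical input is required beyond what is developed in the preceding subsection.
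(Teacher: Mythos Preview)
Your proposal is correct and follows essentially the same approach as the paper: the theorem is stated as a summary of the derivation in Section~\ref{sec:ideal}, which proceeds exactly as you describe---compute $q\reverse{q}$ and $\reverse{q}q$ in the four quaternion representation, extract the $\eps_1$-, $\eps_2$-, $\eps_3$-coefficients to obtain \eqref{eq:8} and then \eqref{eq:9}, verify non-redundancy by computer algebra, and read off dimension and degree from the Hilbert polynomial. One small refinement: the paper's non-redundancy claim is that removing any generator \emph{enlarges the variety}, which is slightly stronger than your proposed ideal-membership test (a generator could lie in the radical of the remaining nine without lying in their ideal); in practice the computer algebra check handles both, but you should phrase the verification accordingly.
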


\subsection{The Null Quadric}
\label{sec:null-quadric}

From \eqref{eq:7} we see that the real parts of $q\reverse{q}$ and
$\reverse{q}q$ are the same and vanish if and only if
\begin{equation}
  \label{eq:10}
  q_0\reverse{q_0} - \SC(q_1,q_2) - q_3\reverse{q_3} = 0.
\end{equation}
This equation defines a regular quadric $\NQ \subset \P^{15} = \P(\CGAp)$ which
we call the \emph{null quadric.} It extends the concept of the \emph{null cone
  $\NC$} in the dual quaternion model of rigid body kinematics. Indeed, when
plugging $q_2 = q_3 = 0$ into \eqref{eq:10} we obtain $q_0\reverse{q_0} = 0$,
the equation of a singular quadric of rank four in $\P^7 = \P(\DH)$.

Denote the ideal of the ``left'' Study condition $q\reverse{q} \in \R$ by
$\mathcal{L}$ and the ideal of the ``right'' Study condition $\reverse{q}q \in
\R$ by $\mathcal{R}$, c.f. \eqref{eq:7}. Using computational algebraic geometry
software, it can be shown that
\begin{equation*}
  \mathcal{L} = \mathcal{I} \cap \mathcal{L}',\quad
  \mathcal{R} = \mathcal{I} \cap \mathcal{R}'
\end{equation*}
where $\mathcal{L}'$ and $\mathcal{R}'$ are irreducible ideals of dimension ten
and degree 20 that both contain the polynomial defining the null quadric
$\mathcal{N}$. This tells us that the ``difference'' between the ``left'' and
the ``right'' Study variety is just in the null quadric $\NQ$. The non-null
points of $\SV$ are already determined by $\mathcal{L}$ or~$\mathcal{R}$.

As already mentioned, points of $\SV \setminus \NQ$ represent conformal
displacements. The algebraic closure of this set is $\SV$ while $\SV \cap \NQ$
can be thought of as its ``boundary''. The importance of boundaries in this
sense for questions of kinematics, in general, has been demonstrated in several
publications of the last decade, i.e.
\cite{djintelbe21,hegedus13:_bonds2,gallet15}. We will return to this in
Section~\ref{sec:straight-lines}.

\subsection{Kinematic Groups and Subvarieties}
\label{sec:groups-subvarieties}

Let us briefly explain how important groups of conformal and rigid body
kinematics can be described in terms of the four quaternion representation $q =
q_0 + q_1\eps_1 + q_2\eps_2 + q_3\eps_3$ and the Study variety ideal
$\mathcal{I}$ in the form~\eqref{eq:8}.

\begin{example}
  The \emph{special orthogonal group $\SO$} is encoded as $\mathcal{I} + \langle
  q_1, q_2, q_3 \rangle$. Because of bilinearity of the generators of
  $\mathcal{I}$ this simplifies to $\langle q_1, q_2, q_3 \rangle$. This just
  reconfirms the fact that $\SO$ is encoded by the quaternion group~$\H$.
\end{example}

\begin{example}
  \label{ex:SE3}
  The \emph{group of rigid body displacements $\SE$} is encoded as $\mathcal{I}
  + \langle q_2, q_3 \rangle$. This simplifies to the ideal $\langle
  \SC(q_0,q_1), q_2, q_3\rangle$ which is the well-known dual quaternion
  representation of $\SE$ \cite[Chapter~11]{selig05}. Composing $\SE$ with a
  fixed orientation reversing Euclidean displacement, for example the reflection
  in the origin, which is represented by $\eps_3$, yields the \emph{set $\Em$ of
    all orientation reversing Euclidean displacements.} From
  \begin{equation*}
    (q_0 + \eps_1 q_1) \eps_3 = q_1\eps_1 + q_0\eps_3
  \end{equation*}
  we see that $\Em$ is represented by the ideal $\langle \SC(q_1,q_3), q_0, q_2
  \rangle$.
\end{example}

\begin{example}
  The \emph{group $\Sim$ of direct similarities} is obtained by composing an
  element of $\SE$ with a uniform scaling. The scaling with factor
  $\sigma^{-1/2}$ is given by the homogeneous spinor $s = 2 +
  (1-\sigma)e_{\infty o} = s_0 + \eps_3 s_3$ where $s_0 = 1+\sigma$ and $s_3 =
  1-\sigma$. Writing the rigid body displacement as $r = p + \eps_1 d$ where
  $\SC(p,d) = 0$ we find
  \begin{equation}
    \label{eq:11}
    rs = (p + \eps_1 d)(s_0 + \eps_3 s_3) = ps_0 + d(s_0 + s_3)\eps_1 + ps_3\eps_3
    = ps_0 + 2d\eps_1 + ps_3\eps_3.
  \end{equation}
  The ideal of $\Sim$ is generated as $\langle \SC(q_0,q_1), q_2,
  \Vect(q_0\reverse{q_3}) \rangle$. The last condition encodes linear dependence
  of $q_0$ and $q_3$.
\end{example}

Note that neither $\Sim$ nor the group generated by $\SO$ and uniform scalings
are represented by projective subspaces of the Study variety $\SV$. This is
different for the subgroup generated by scalings and translations:

\begin{example}
  \label{ex:scalings-translations}
  Denote by $v = v_1\qi + v_2\qj + v_3\qk$ the translation vector. The
  composition of translation and scaling is obtained by plugging $p = 1$ and $d
  = -\frac{1}{2}v$ into \eqref{eq:11}, resulting in $s_0 - t\eps_1 + s_3\eps_3$.
  Thus, the ideal is
  \begin{equation*}
    \langle \Vect(q_0), \Scal(q_1), q_2, \Vect(q_3) \rangle
  \end{equation*}
  where $\Scal(\cdot)$ denotes the scalar part of a quaternion. This ideal is
  generated by eleven linear equations that indeed describe a projective
  subspace of dimension four.
\end{example}

\begin{example}
  \label{ex:5}
The composition of an inversion in the unit sphere, a translation, and
  another inversion in the unit sphere is called a \emph{transversion}
  \cite[Section~16.4]{dorst07}. More generally, we can replace the translation
  by an arbitrary element of $\SE$. This generates a subgroup isomorphic to
  $\SE$ whose elements have the four quaternion representation $q_0 + \eps_2
  q_2$. Transversions in the stricter sense of \cite[Section~16.4]{dorst07}
  appear for $q_0 \in \R$.
\end{example}

\section{Straight Lines on the Study Variety}
\label{sec:straight-lines}

The kinematic interpretation of straight lines on the Study quadric $\SQ$ is
well-known \cite[Section~11.2.1]{selig05}. Straight lines through the identity
displacement correspond, in general, to rotations around a fixed axis or, in
exceptional cases, to translations in a fixed direction. General straight lines
on $\SQ$ correspond to rotations or translations, composed with a fixed rigid
body displacement, either from the left or from the right.
For a given straight line trough the identity displacement $[1]$ we can chose an
arbitrary point $[q_0+\eps_1 q_1]\neq [1]$ with quaternions $q_0$, $q_1\in\H$ on
the line to obtain a parametric equation $t+q_0+\eps_1 q_1$. Here $t$ is a real
parameter, the point $[q_0+\eps_1 q_1]$ is obtained for $t=0$ while $[1]$ is
obtained in the limit $t\to\infty$.
The Study condition has to be fulfilled identically in $t$ so that $q_1$ is
necessarily vectorial. With $q = q_0 + \eps_1 q_1$ this is equivalent to $q +
\reverse{q} \in \R$. It is no loss of generality to assume that $q_0$ is
vectorial also, as otherwise we can re-parametrize via $t \mapsto t -
\Scal(q_0)$. This can be encoded as $q + \reverse{q} = 0$. In this section we
generalize these relations to \CGAp and its Study variety~$\SV$.

The cases of rotations and translations can be distinguished by the position of
the straight line with respect to the null cone $\NC$. Straight lines
corresponding to rotations intersect $\NC$ in a pair of conjugate complex points
while straight lines corresponding to translations intersect in the point
$[\eps_1q_1]$ with multiplicity two. As we shall see, the classification of
straight lines in the Study variety $\SV$ via the number of their real
intersection points with the null quadric $\NQ$ is natural in \CGAp as well.

We proceed with investigating straight lines through the identity displacement
and contained in $\SV$. In the light of above discussion, obvious examples come
from rotations around a fixed axis and translations in a fixed direction. They
are special cases of simple conformal motions described by L.~Dorst in
\cite{dorst16}. Slightly adapting Dorst's Equation~(2), we can define:

\begin{definition}
  \label{def:dorst-motion}
  A \emph{simple motion} is given by the exponential $\mathrm{e}^{uq}$, where $u$ is a
  real parameter and $q$ is a $2$-blade, that is $q = a \wedge b$, where $a$ and
  $b$ are vectors. This exponential is given by
  \begin{equation}
    \label{eq:12}
    \mathrm{e}^{uq} = \begin{cases}
\cos u + q \sin u \quad & \text{if $q\reverse{q} > 0$,} \\
      1 + q u \quad & \text{if $q\reverse{q} = 0$,} \\
      \cosh u + q \sinh u \quad & \text{if $q\reverse{q} < 0$}.
      \end{cases}
    \end{equation}
\end{definition}

We claim that elementary motions give examples of straight lines in~$\SV$:

Writing $a = a_o e_o + a_\infty e_\infty + a_1e_1 + a_2e_2 + a_3e_3$, $b = b_o
e_o + b_\infty e_\infty + b_1e_1 + b_2e_2 + b_3e_3$ and define $q_a \coloneqq
a_1\qi + a_2\qj + a_3\qk$, $q_b \coloneqq b_1\qi + b_2\qj + b_3\qk$. Replacing
$a$ or $b$ by a linear combination of $a$ and $b$ changes $q$ only up to an
irrelevant real factor. Thus, it is no loss of generality to assume $b_o = 0$.
We then have
\begin{equation}
  \label{eq:13}
  q = a \wedge b = -q_a \times q_b
  + (b_\infty q_a-a_\infty q_b) \eps_1
  - a_o q_b \eps_2
  - a_o b_\infty \eps_3.
\end{equation}
We see that $q$ has the four quaternion representation $q = q_0 + q_1\eps_1 +
q_2\eps_2 + q_3\eps_3$ with $\Scal(q_0) = \Scal(q_1) = \Scal(q_2) = \Vect(q_3) =
0$. It is easy to verify that the point $[q]$ satisfies the Study
conditions~\eqref{eq:9}. Any of the parametric equations \eqref{eq:12} is
equivalent to a parametric equation of type $t + q$ via a simple parameter
transformation (either $t = \cot u$, $t = u^{-1}$, or $t = \pm \coth u$). This
is, indeed, the parametric equation for a straight line through the identity
displacement. It is contained in $\SV$ by Dorst's construction (or by
Theorem~\ref{th:study-lines} below). Note that the identity displacement $[1]$
is only obtained in the limit for $\vert t \vert \to \infty$.

The main insight of this section is that all straight lines $t + q$ through the
identity and contained in $\SV$ are simple motions. We proceed by
deriving the conditions on $q$ that ensure that $[t+q] \in \SV$ for any $t \in
\R$ and then demonstrate that these conditions allow the decomposition $q = a
\wedge b$ with vectors $a$ and~$b$.

\begin{theorem}
  \label{th:study-lines}
  The straight line $t + q$ with $q = q_0 + q_1\eps_1 + q_2\eps_2 + q_3\eps_3$
  is contained in the Study variety $\SV$ if and only if
  $[q] \in \SV$
  and $q + \reverse{q} \in \R$. If $\Scal(q_0) = 0$, then this is equivalent to
  $q + \reverse{q} = 0$.
\end{theorem}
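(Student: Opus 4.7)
The plan is to treat $t+q$ as a polynomial in the real parameter $t$ with coefficients in $\CGAp$, and to use the fact that a polynomial with $\CGAp$-coefficients takes values in $\R$ for every real $t$ if and only if each coefficient is real. Since reversion is $\R$-linear and fixes scalars, $\reverse{(t+q)} = t + \reverse{q}$, so the two norms expand as
\begin{equation*}
  (t+q)\reverse{(t+q)} = t^2 + t(q+\reverse{q}) + q\reverse{q},\qquad
  \reverse{(t+q)}(t+q) = t^2 + t(q+\reverse{q}) + \reverse{q}q.
\end{equation*}

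From here the equivalence is immediate: the $t^2$ coefficient is the constant $1$, the linear coefficient is the same element $q+\reverse{q}$ in both expansions, and the constant coefficients are $q\reverse{q}$ and $\reverse{q}q$. Requiring both polynomials to lie in $\R[t]$ therefore amounts exactly to $q+\reverse{q} \in \R$ together with $q\reverse{q},\reverse{q}q \in \R$, the latter being the defining condition $[q]\in\SV$. Conversely, if these conditions hold, both polynomials are manifestly real for every $t\in\R$, which is what it means for the projective line $t+q$ to lie in $\SV$.

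For the final clause I would read off $q+\reverse{q}$ in the four quaternion representation via \eqref{eq:4}:
\begin{equation*}
  q+\reverse{q} = (q_0+\reverse{q_0}) + (q_1+\reverse{q_1})\eps_1 + (q_2+\reverse{q_2})\eps_2 + (q_3-\reverse{q_3})\eps_3,
\end{equation*}
and rewrite the quaternionic combinations using $f+\reverse{f}=2\Scal(f)$ and $f-\reverse{f}=2\Vect(f)$ to obtain
\begin{equation*}
  q+\reverse{q} = 2\Scal(q_0) + 2\Scal(q_1)\eps_1 + 2\Scal(q_2)\eps_2 + 2\Vect(q_3)\eps_3.
\end{equation*}
Because $\eps_1,\eps_2,\eps_3$ are $\R$-linearly independent from $1$ in $\CGAp$, the condition $q+\reverse{q}\in\R$ forces $\Scal(q_1)=\Scal(q_2)=0$ and $\Vect(q_3)=0$, leaving $q+\reverse{q}=2\Scal(q_0)$; under the additional hypothesis $\Scal(q_0)=0$ this collapses to $q+\reverse{q}=0$.

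There is no real obstacle here beyond bookkeeping: the argument is a one-line polynomial-degree count combined with the explicit reversion formula \eqref{eq:4}. The only point that deserves a brief justification in the write-up is why the vanishing of each $\CGAp$-valued coefficient is necessary, but this follows at once from the $\R$-linear independence of the monomial basis of $\CGAp$ viewed as a $16$-dimensional real vector space.
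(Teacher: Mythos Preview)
Your argument is correct and matches the paper's proof essentially line for line: expand $(t+q)(t+\reverse{q})$ and $(t+\reverse{q})(t+q)$ as quadratics in $t$, read off that reality for all $t$ is equivalent to $q+\reverse{q}\in\R$ together with $q\reverse{q},\reverse{q}q\in\R$, and then use the four quaternion reversion formula to obtain \eqref{eq:14} for the second claim. Your write-up is in fact slightly more careful than the paper's in keeping the two products separate before invoking $[q]\in\SV$.
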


\begin{proof}
  Recall that $[q] \in \SV$ is equivalent to $q\reverse{q} = \reverse{q}q \in
  \R$. Now, $(t+q)(t+\reverse{q})$ and $(t+\reverse{q})(t+q)$ need to be real
  for any $t \in \R$. From
  \begin{equation*}
    (t+q)(t+\reverse{q}) =
    (t+\reverse{q})(t+q) =
    t^2 + (q + \reverse{q})t + \reverse{q}q
  \end{equation*}
  we infer that this is the case if and only if $q + \reverse{q}$ and
  $\reverse{q}q$ is real. The latter condition implies $[q] \in \SV$ and the
  theorem's first claim follows. The second can be read off from the explicit
  representation
  \begin{equation}
    \label{eq:14}
    \frac{1}{2}(q + \reverse{q}) = \Scal(q_0) + \Scal(q_1)\eps_1 + \Scal(q_2)\eps_2 + \Vect(q_3)\eps_3.
    \qedhere
  \end{equation}
\end{proof}

\begin{lemma}
  \label{lem:wedge}
  If $q + \reverse{q} = 0$ there exist vectors $a$, $b$ such that $q = a \wedge
  b$.
\end{lemma}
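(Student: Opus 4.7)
The lemma must implicitly carry the Study-variety hypothesis $[q]\in\SV$ from the ambient discussion, since without it the conclusion fails (e.g.\ $q=e_{12}+e_{3+}$ is a bivector with $q+\reverse{q}=0$ that is not a $2$-blade). Granting $[q]\in\SV$, the approach is to reinterpret the two hypotheses as ``$q$ is a pure bivector'' and ``$q$ is a simple bivector'', and then to recognise simplicity as a wedge decomposition.

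By~\eqref{eq:14}, $q+\reverse{q}=0$ is equivalent to $\Scal(q_0)=\Scal(q_1)=\Scal(q_2)=0$ and $\Vect(q_3)=0$. These are precisely the grade-$0$ and grade-$4$ coefficients of $q$ in the usual grading of $\CGA$, so $q$ is a pure bivector and $\reverse{q}=-q$. In particular $q\reverse{q}=-q^{2}$, so the Study condition $q\reverse{q}\in\R$ becomes $q^{2}\in\R$. For any bivector the square decomposes by grade as $q^{2}=q\cdot q+q\wedge q$; the grade-$2$ contribution, being the commutator $\tfrac12(q^{2}-q^{2})$, vanishes identically. Hence the Study condition is equivalent to the Plücker-type simplicity relation $q\wedge q=0$.

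From here, the existence of vectors $a,b$ with $q=a\wedge b$ is the classical characterisation of decomposable $2$-vectors, and is signature-independent. In the spirit of Section~\ref{sec:study-variety} I would make it explicit in four quaternion coordinates. With $q_0,q_1,q_2$ vectorial and $q_3\in\R$ the ideal~\eqref{eq:9} collapses to the three relations
\begin{equation*}
\langle q_0,q_1\rangle=0,\qquad\langle q_0,q_2\rangle=0,\qquad q_1\times q_2+q_3q_0=0.
\end{equation*}
Matching these against~\eqref{eq:13} with $b_o=0$ and the generic anchor $a_o=1$ forces $q_b=-q_2$ and $b_\infty=-q_3$ at once; the second relation then guarantees the existence of $q_a$ with $q_a\times q_2=q_0$, and the third relation is precisely the consistency condition needed to solve the remaining equation $q_1=-q_3q_a+a_\infty q_2$ for $a_\infty$.

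I expect the main obstacle to be case analysis rather than algebra. The ansatz $a_o=1$ is only generically viable: when $q_2=q_3=0$ one must switch to $a_o=a_\infty=0$ and look for Euclidean vectors $q_a,q_b$ satisfying $q_1=b_\infty q_a$ and $q_0=-q_a\times q_b$, and the further sub-case $q_1=0$ reduces to the fact that every bivector in $\operatorname{span}(e_{23},e_{13},e_{12})$ is already a $2$-blade in three-space. Organising these branches into a single uniform conclusion — or bypassing them entirely by invoking the coordinate-free Plücker characterisation in $\CGA$ — is where the real bookkeeping lies.
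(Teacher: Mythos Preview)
Your proposal is correct, and the Plücker route you outline is genuinely different from the paper's argument. The paper never passes through the grade decomposition or the simplicity criterion $q\wedge q=0$; instead it works entirely in four-quaternion coordinates, writes down the system~\eqref{eq:15}, and solves it by a direct case analysis on $(q_3,q_2)$: Case~$q_3\neq 0$ (your ``generic anchor''), Case~$q_3=0,\ q_2\neq 0$, and Case~$q_3=q_2=0$, each time invoking a specific generator of~$\mathcal I$ to guarantee solvability of the remaining linear/cross-product equation. Your observation that $q+\reverse{q}=0$ kills exactly the grade-$0$ and grade-$4$ components, together with the fact that the grade-$2$ part of $q^2$ is the vanishing self-commutator, compresses all of this into the single coordinate-free statement ``$q$ is a decomposable $2$-vector in $\Lambda^2\R^{4,1}$'', after which the existence of $a,b$ is classical. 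What the paper's approach buys is an explicit recipe for $a$ and $b$ in each regime, which feeds directly into the later discussion of Dorst's six motion types; what your approach buys is a one-line conceptual proof that makes the role of the Study condition transparent and sidesteps the bookkeeping you correctly flag at the end.
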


\begin{proof}
  The case $q = 0$ is trivial and not of interest to us. We will exclude it
  for the remainder of this proof.

  With $q \neq 0$ given as $q = q_0 + q_1\eps_1 + q_2\eps_2 + q_3\eps_3$,
  \eqref{eq:13} implies that we have to solve
  \begin{equation}
    \label{eq:15}
    q_0 = -q_a \times q_b,\quad
    q_1 = b_\infty q_a-a_\infty q_b,\quad
    q_2 = - a_o q_b,\quad
    q_3 = - a_o b_\infty
  \end{equation}
  for the scalars $a_0$, $a_\infty$, $b_\infty$ and the vectors $q_a$ and $q_b$.
  From \eqref{eq:14} we see that $q_0$, $q_1$, and $q_2$ are vectorial and $q_3$
  is scalar. This is an obvious necessary condition for \eqref{eq:15} to have a
  solution. We distinguish the two cases $q_3 = 0$ and $q_3 \neq 0$.

  Case 1: $q_3 = 0$: If $q_2 = 0$ (Case~1.1; rigid body displacements), then
  either $a_o = 0$ or $b_\infty = q_b = 0$. The latter implies $b = 0$ and is
  not possible because we assumed $q \neq 0$. Thus, $a_o = 0$. Now, we are left
  with the two equations
    \begin{equation*}
      q_0 = -q_a \times q_b,\quad
      q_1 = b_\infty q_a-a_\infty q_b.
    \end{equation*}
    We view them as vector equations in $\R^3$. If $q_0 = 0$, we can pick $q_a$
    and $q_b$ as scalar multiples of $q_1$ to satisfy the first equation and
    then solve the second equation for $a_\infty$ and $b_\infty$. If $q_0 \neq
    0$, the first equation admits infinitely many solutions for $q_a$ and $q_b$,
    all of them in the orthogonal complement $q_0^\perp$ of $q_0$ and linearly
    independent. By the Study conditions on $q$ we have $\SC(q_0,q_1) = 0$ so
    that $q_1 \in q_0^\perp$ as well. Therefore, given solutions $q_a$ and $q_b$
    of the first equation, the second equation can be solved for $a_\infty$
    and~$b_\infty$.

    If $q_2 \neq 0$ (Case 1.2), then $a_o \neq 0$ so that $b_\infty = 0$. We set
    $a_o = -1$ and $q_b = q_2$ to satisfy the third equation of \eqref{eq:15}.
    The remaining equations are
    \begin{equation*}
      q_0 = -q_a \times q_2,\quad
      q_1 = -a_\infty q_2.
    \end{equation*}
    By \eqref{eq:8}, points of the Study variety satisfy
    $\Vect(q_1\reverse{q_2})-\Vect(q_0\reverse{q_3}) = 0$ which simplifies to
    $q_1q_2 \in \R$, or equivalently $q_1 \times q_2 = 0$, in our case. Thus,
    $q_1$ and $q_2$ are linearly dependent as vectors in $\R^3$. Since $q_2 \neq
    0$, the second equation can be solved uniquely for $a_\infty$ while the
    first equation admits infinitely many solutions for~$q_a$.

    Case~2: $q_3 \neq 0$. We set $a_o = -1$, $b_\infty = q_3$, and $q_b = q_2$
    to satisfy the third and fourth equation of \eqref{eq:15}. The two remaining
    equations read
    \begin{equation*}
      q_0 = -q_a \times q_2, \quad
      q_1 = q_3q_a - a_\infty q_2.
    \end{equation*}
    Again, we view them as vector equations in $\R^3$. Solving the second
    equation for $q_a$ yields $q_a = (q_1 + a_\infty q_2)/q_3$. Plugging this
    into the first equation leads to $q_0q_3 = -q_1 \times q_2$. This holds true
    as it is only an alternative way of writing the condition
    $\Vect(q_0\reverse{q_3}) - \Vect(q_1,\reverse{q_2}) = 0$ from \eqref{eq:8}.

    After having discussed all possible cases and sub-cases, the proof is
    complete.
\end{proof}

\begin{theorem}
  \label{th:dorst-motions}
  Elementary Dorst motions correspond to straight lines on $\SV$ through the
  identity displacement $[1]$ and vice versa.
\end{theorem}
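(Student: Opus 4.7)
The theorem has two directions, and the bulk of the technical work has already been carried out in the material preceding it: the Dorst side is handled by the explicit computation leading to \eqref{eq:13}, and the straight-line side is covered by Theorem \ref{th:study-lines} together with Lemma \ref{lem:wedge}. My plan is simply to assemble these pieces and then handle a routine reparametrization that passes between the affine form $t+q$ and Dorst's trigonometric/linear/hyperbolic normal forms \eqref{eq:12}.

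For the forward direction (elementary Dorst motions give straight lines through $[1]$ on $\SV$), I would recall the computation \eqref{eq:13}: writing $q=a\wedge b$ with $b_o=0$ (a harmless normalization inside the projective class) exhibits $q$ in four quaternion form with $\Scal(q_0)=\Scal(q_1)=\Scal(q_2)=\Vect(q_3)=0$, and the scalar relations imposed on $a,b$ are precisely those making the generators in \eqref{eq:9} vanish. Hence $[q]\in\SV$. To see that each of the three parametrizations in \eqref{eq:12} is projectively a straight line in the form $t+q$, I would apply the substitutions $t=\cot u$, $t=u^{-1}$, and $t=\pm\coth u$ respectively, in each case dividing by the nonzero common factor ($\sin u$, $u$, or $\sinh u$) to land in the same projective line as $t+q$. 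The identity $[1]$ is obtained as the projective limit $|t|\to\infty$, corresponding to the distinguished parameter values $u=0$ (elliptic and parabolic cases) or $u\to\infty$ (hyperbolic case).

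For the backward direction, let $\ell$ be a straight line through $[1]$ contained in $\SV$. Since $[1]\in\ell$, I can write $\ell=\{[t+q]:t\in\R\cup\{\infty\}\}$ for some nonzero $q\in\CGAp$. Replacing $t$ by $t-\Scal(q_0)$, I may assume $\Scal(q_0)=0$. Theorem \ref{th:study-lines} then forces $q+\reverse{q}=0$ as well as $[q]\in\SV$, and Lemma \ref{lem:wedge} produces vectors $a,b$ with $q=a\wedge b$. The sign of $q\reverse{q}$ determines which of the three normal forms in \eqref{eq:12} applies: applying the inverse of the respective substitution above re-expresses $\ell$ projectively as $\cos u+q\sin u$, $1+qu$, or $\cosh u+q\sinh u$. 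Thus $\ell$ is an elementary Dorst motion.

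The only subtlety — essentially bookkeeping rather than a real obstacle — is making precise that the parametrizations in \eqref{eq:12} and the affine parametrization $t+q$ trace out the same projective set, so that "straight line on $\SV$" and "elementary Dorst motion" refer to the same object in $\P^{15}$. Once this is acknowledged, both directions are immediate consequences of Theorem \ref{th:study-lines} and Lemma \ref{lem:wedge}, and the proof reduces to a few lines citing these results and the parameter substitutions above.
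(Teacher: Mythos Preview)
Your proposal is correct and follows essentially the same approach as the paper's proof: both directions are reduced to Theorem~\ref{th:study-lines} and Lemma~\ref{lem:wedge}, with the reparametrization $t\mapsto t-\Scal(q_0)$ bridging the conditions $q+\reverse q\in\R$ and $q+\reverse q=0$, and the substitutions $t=\cot u$, $t=u^{-1}$, $t=\pm\coth u$ identifying Dorst's normal forms \eqref{eq:12} with the projective line $t+q$. The only minor point worth tightening in the forward direction is to state explicitly that the constraints $\Scal(q_0)=\Scal(q_1)=\Scal(q_2)=\Vect(q_3)=0$ together with $[q]\in\SV$ imply, via Theorem~\ref{th:study-lines}, that the whole line $t+q$ lies on~$\SV$; you verify the ingredients but do not quite close that loop.
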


\begin{proof}
  We have already argued that a simple motion can be parameterized as
  $t + q$ where $[q] \in \SV$ and $q + \reverse{q} = 0$. This is, indeed, a
  straight line through $[1]$. Conversely, given a straight line on $\SV$ and
  through $[1]$ it can always be parametrized as $t + q$ with $q + \reverse{q}
  \in \R$ by Theorem~\ref{th:study-lines}. The admissible re-parametrization $t
  \mapsto t - \Scal(q_0)$ then implies $q + \reverse{q} = 0$ whence $q = a
  \wedge b$ with vectors $a$, $b$ by Lemma~\ref{lem:wedge}. But then $t + q$ is
  a simple motion by Definition~\ref{def:dorst-motion}.
\end{proof}

The decomposition $q = a \wedge b$ of Lemma~\ref{lem:wedge} is not unique.
Replacing vectors $a$ with the linear combination $\alpha_1 a+\beta_1 b$ and $b$
by $\alpha_2 a+\beta 2 b$, for $\alpha_1$, $\alpha_2$, $\beta_1$, $\beta 2\in\R$
with $\alpha_1\beta_2-\alpha_2\beta_1=1$ will result in the same $q$. By taking
independent linear combinations of $a$ and $b$, we will obtain a scalar multiple
of $q$, which represents the same point on the Study variety. General vectors
$a$ and $b$ represent spheres in \CGA. The possibility to pick special spheres,
i.e., planes or points, in the decomposition $q = a \wedge b$ of $q$ gives rise
to six types of simple motions \cite{dorst16}: conformal rotations, conformal
scalings, transversions, rotations, translations and uniform scalings.

In our context, it is maybe more natural to primarily distinguish three types of
straight lines $t + q$ through the identity displacement and on $\SV$ according
to the number $n$ of real intersection points with the null quadric $\NQ$. They
correspond to the Dorst cases conformal rotation ($n = 0$), transversion ($n =
1$) in a more general sense than in Example~\ref{ex:5}, and conformal scaling
($n = 2$). Sub-cases with special Euclidean relevance are Euclidean rotation ($n
= 0$), translation ($n = 1$), and uniform scaling ($n = 2$). They are easy to
distinguish in the four quaternion representation of $q = q_0 + q_1\eps_1 +
q_2\eps_2 + q_3\eps_3$, c.f. Examples~\ref{ex:SE3} and
\ref{ex:scalings-translations}:
\begin{itemize}
\item Euclidean rotations among conformal rotations and translations among
  transversions are characterized by $q_2 = q_3 = 0$, that is, they are rigid
  body displacements.
\item Uniform scalings are characterized among conformal scalings by $\Vect(q_0)
  = \Scal(q_1) = q_2 = \Vect(q_3) = 0$. The condition $\Scal(q_1) = 0$ is
  already implied by the fact that $t + q$ is contained in the Study
  variety~$\SV$.
\end{itemize}

The six simple motions can also be distinguished by the number $f \in
\{0,1,2\}$ of their real fixpoints and, if $f \ge 1$, whether $e_\infty$ is
among them. The relation of fixpoints to the intersection points of $t + q$ with
$\NQ$ is investigated next.

\subsection*{Conformal Rotations and Conformal Scalings}
\label{sec:conformal-rotations-scalings}

Given a straight line $t + q$ on $\SV$ and through the point $[1]$ we already
argued (Lemma~\ref{lem:wedge}) that $q$ can be chosen as $q = a \wedge b$ with
vectors $a$, $b$. In the two generic cases of Dorst's classification (conformal
rotation and conformal scaling), $a$ and $b$ are two points --- conjugate
complex in case of conformal rotations and real in case of conformal scalings.
We compute the parameter values $t_1$, $t_2$ of the intersection points $[n_1]$,
$[n_2]$ with the null quadric $\NQ$. From
\begin{equation*}
  (t + q)(t + \reverse{q}) = (t + q)(t - q) = t^2 - q^2 = t^2 - (a \cdot b)^2
\end{equation*}
we infer $t_1 = a \cdot b$ and $t_2 = -a \cdot b$ and hence $[n_1] = [ab]$,
$[n_2]=[-ba]=[ba]$.

Consider now, at least formally, the $n_1$-image of a point $x$. We compute
\begin{equation*}
  y = n_1x\reverse{n_1} = 4 (a \cdot b)(b \cdot x) a
\end{equation*}
so that the $n_1$-image of a point $x$ equals $a$ unless $b \cdot x = 0$, i.e.,
$x$ is ``perpendicular'' to $b$. In this sense, the ``null displacement'' $n_1$
has the generic image point $a$. The generic image point for $n_2$ is~$b$.

\subsection*{Uniform Scalings}
\label{sec:uniform-scalings}

A similar computation is possible for $q = a \wedge b$ where $a$ represents the
point with coordinates $(a_1,a_2,a_3)$ and $b = e_\infty$. According to
\cite{dorst16}, this is a uniform scaling with center $a$ (an ``isotropic''
scaling, as it is called there). We have
\begin{equation*}
  q = a_1e_{1\infty} + a_2e_{2\infty} + a_3e_{3\infty} - e_{\infty o} - 1
  \quad\text{and}\quad
  (t + q)(t + \reverse{q}) = t^2 - 1
\end{equation*}
so that the intersection points with $\NQ$ are
\begin{equation*}
  \begin{aligned}
    [n_1] &= [-1+q] = [a_1e_{1\infty} + a_2e_{2\infty} + a_3e_{3\infty} - e_{\infty o} - 2] = [ae_\infty],\\
    [n_2] &= [1+q] = [a_1e_{1\infty} + a_2e_{2\infty} + a_3e_{3\infty} - e_{\infty o}] = [-\reverse{n_1}] = [\reverse{n_1}].
\end{aligned}
\end{equation*}
Acting with $n_1$ and $n_2$ on a generic point $x$ gives
\begin{equation*}
  n_1x\reverse{n_1} = 4a,
  \quad
  n_2x\reverse{n_2} = -4(a \cdot x) e_\infty,
\end{equation*}
respectively. We see that the action of $n_1$ on $x$ always gives point $a$. The
only exception is $x = e_\infty$ where the action is undefined due to
$n_1e_\infty\reverse{n_1} = 0$. The action of $n_2$ on $x$ gives $e_\infty$
unless $a \cdot x = 0$.

A more elementary interpretation for $n_1$ and $n_2$ is scalings with center $a$
and respective scaling factors $0$ and $\infty$. It is very intuitive to say
that the zero-scaling $n_1$ fixes $e_\infty$ and maps everything else to $a$.
Likewise, the $\infty$-scaling $n_2$ fixes $a$ and maps everything else
to~$e_\infty$.

\subsection*{Transversions and Translations}
\label{sec:transversions-translations}

A transversion is given by $q = a \wedge b$ where $a$ represents a point
and $b$ is a plane perpendicular to $a$. If $a \neq e_\infty$ we may write $a =
(a_1,a_2,a_3)$ whence
\begin{equation*}
  b = b_1 e_1 + b_2 e_2 + b_3 e_3 + (a_1 b_1 + a_2 b_2 + a_3 b_3) e_\infty.
\end{equation*}
Because of $a \cdot b = 0$ we have $q = a \wedge b = ab$. Because $a$
and $b$ are both vectors, we further have $\reverse{q}= ba$ so that
$q\reverse{q} = \reverse{q}q = abba = a^2b^2 = 0$, because $a^2 = 0$. By
\eqref{eq:13} $q + \reverse{q} = 0$ so that $(t+q)(t+\reverse{q}) = t^2$ whence
$q$ is, indeed, the only intersection point of the straight line $t+q$ with
$\NQ$. Acting on a generic point $x$ yields
\begin{equation*}
  qx\reverse{q} = -2 b^2 (a \cdot x) a.
\end{equation*}
We infer that $a$ is the image of point $x$ unless $x$ is perpendicular to~$a$.
A similar computation is also possible for $a = e_\infty$. It gives a
translation.

Summarizing our findings in
Sections~\ref{sec:conformal-rotations-scalings}--\ref{sec:transversions-translations},
we can say that a simple motion, represented by a straight line $t +
q$ on $\SV$ and through the point $[1]$ has up to two exceptional real points
that are obtained as images of a generic point under the displacements obtained
as real intersection points of $t + q$ with the null quadric~$\NQ$. They are
fixed by the motion but the set of fixed points is generically larger.

\begin{remark}
  Euclidean rotations are somewhat exceptional in this context. Here $q = a
  \wedge b$ with \emph{planes} $a$ and $b$. In the projective closure of
  Euclidean three-space, one might be tempted to consider planes with $a^2 = b^2
  = 0$ as complex points at infinity. This viewpoint is uncommon in the
  conformal closure of Euclidean three-space.
\end{remark}

\section{Outlook}
\label{sec:outlook}

An important motivation for our study is extension of the factorization theory
of motion polynomials
\cite{hegedus13:_factorization2,li18:_clifford_algebras,li19:_motion_polynomials}
to ``spinor polynomials'' of conformal geometric algebra. These are defined as
univariate polynomials $C$ with coefficients in $\CGAp$ such that $C\reverse{C}
= \reverse{C}C$ is a real polynomial different from $0$. They describe conformal
motions where all point trajectories are rational curves. Factorization of $C$
into linear factors corresponds to the decomposition of the conformal motion
into concatenated simple motions. Results of \cite{li18:_clifford_algebras}
essentially imply that such factorizations do exist for ``generic'' spinor
polynomials. Typically, they are not unique. We present one example:

\begin{example}
  \label{ex:factorization}
  The polynomial $C = (t + h_1)(t + h_2)(t + h_3)$ with
  \begin{equation*}
    h_1 = - e_{1\infty} + 2e_{1o},\quad
    h_2 = - e_{2\infty} - 2e_{2o},\quad
    h_3 = 1 - \tfrac{1}{2} e_{3\infty} + e_{3o} + e_{\infty o}
  \end{equation*}
  satisfies
  $C\reverse{C} = M_1 M_2 M_3$ where
  \begin{equation*}
    \begin{aligned}
      M_1 &\coloneqq (t + h_1)(t + \reverse{h}_1) = t^2 + 4,\\
      M_2 &\coloneqq (t + h_2)(t + \reverse{h}_2) = t^2 - 4,\\
      M_3 &\coloneqq (t + h_3)(t + \reverse{h}_3) = t^2.
    \end{aligned}
  \end{equation*}
  It is thus a spinor polynomial. We see that it is the composition of a
  transversion, a conformal scaling, and a conformal rotation. They correspond,
  in that order, to $h_3$, $h_2$, and $h_1$. Using Algorithm~2 of
  \cite{li18:_clifford_algebras} we can compute further factorizations $C =
  (t+k_1)(t+k_2)(t+k_3) = (t+\ell_1)(t+\ell_2)(t+\ell_3)$ where
  \begin{equation*}
    \begin{aligned}
      k_1 &= -e_{2\infty} - 2 e_{2 o},\\
      k_2 &= -1 + \tfrac{1}{2} e_{3\infty} - e_{3 o} - e_{\infty o},\\
      k_3 &= \phantom{-}2 - e_{1\infty} + 2 e_{1 o} - e_{3\infty} + 2 e_{3 o} + 2 e_{\infty o},
    \end{aligned}
  \end{equation*}
  and
  \begin{equation*}
    \begin{aligned}
      \ell_1 &= \phantom{-}1 + e_{2 3} - \tfrac{1}{2} e_{2 \infty} - e_{2 o} + \tfrac{1}{2} e_{3 \infty} + e_{3 o},\\
      \ell_2 &= -2 - e_{2 3} - \tfrac{1}{2} e_{2 \infty} - e_{2 o} - 2 e_{3 o} - e_{\infty o},\\
      \ell_3 &= \phantom{-}2 - e_{1 \infty} + 2 e_{1 o} - e_{3 \infty} + 2 e_{3 o} + 2 e_{\infty o}.
  \end{aligned}
  \end{equation*}
  Because of $(t+k_1)(t+\reverse{k}_1) = M_2$, $(t+k_2)(t+\reverse{k}_2) = M_3$,
  and $(t+k_3)(t+\reverse{k}_3) = M_1$ the factors $k_3$, $k_2$, and $k_1$
  correspond, in that order, to a conformal rotation, a transversion, and a
  conformal scaling. Finally, we have $(t+\ell_1)(t+\reverse{\ell}_1) = t(t+2)$,
  $(t+\ell_2)(t+\reverse{\ell}_2) = t(t-2)$, $(t+\ell_3)(t+\reverse{\ell}_3) =
  M_1$, the third factorization correspond to the decomposition into a conformal
  rotation and two conformal scalings.\hfill $\diamond$
\end{example}

Algorithm~2 of \cite{li18:_clifford_algebras} produces indeed, the maximal
number of twelve factorizations of the polynomial $C$ of
Example~\ref{ex:factorization}. Each of these factorizations corresponds to one
of the twelve ways of writing $C\reverse{C}$ as product of real quadratic 
polynomials. The algorithm relies on the fact that certain algebra elements,
obtained as leading coefficients of linear remainders of polynomial
division, are invertible. This is generic behavior but non-generic examples are
assumed to exist. Factorizability results in this sense will be the topic of a
future publication.

\section{Conclusion}
\label{sec:conclusion}

We have introduced Study variety $\SV$ and null quadric $\NQ$ of conformal
kinematics and investigated some of their properties. The structure of their
ideals becomes clearer in our four quaternion representation of \CGAp. We also
use the four quaternion representation for investigating straight lines on
$\SV$. They turn out to be related to elementary motions suggested by L.~Dorst
in \cite{dorst16}.

We view our results as generalizations of the representation of $\SE$, the
groups of rigid body displacements, where dual quaternions give rise to a
kinematic from $\SE$ to the points of the Study quadric $\SQ$ minus the null
cone $\NC$. Straight lines on $\SQ$ are known to correspond to rotations around
a fixed axis or translations in a fixed direction. They appear naturally in the
factorization theory of motion polynomials \cite{hegedus13:_factorization2} and
we have pointed at similar relations between ``spinor polynomials'' and straight
lines on $\SV$, that is, simple motions.

\section*{Acknowledgement} 

Bahar Kalkan was supported by the BIDEB 2211-E scholarship programme of The
Scientific and Technological Research Council of Turkey. Johannes Siegele was
supported by Austrian Science Fund (FWF) P~33397-N (Rotor Polynomials: Algebra
and Geometry of Conformal Motions).

\begin{bibdiv}
\begin{biblist}

\bib{bayro-corrochano19}{book}{
      author={Bayro-Corrochano, Eduardo},
       title={Geometric algebra applications. {Computer} vision, graphics and
  neurocomputing},
   publisher={Springer},
        date={2019},
      volume={1},
}

\bib{djintelbe21}{article}{
      author={Djintelbe, Nestor},
      author={Coste, Michel},
       title={Compactification of the group of rigid motions and applications
  to robotics},
        date={2021},
     journal={J. Pure Appl. Algebra},
      volume={225},
      number={7},
        note={Article ID 106604},
}

\bib{dorst16}{article}{
      author={Dorst, Leo},
       title={The construction of {3D} conformal motions},
        date={2016},
     journal={Math. Comput. Sci.},
      volume={10},
       pages={97\ndash 113},
}

\bib{dorst07}{book}{
      author={Dorst, Leo},
      author={Fontijne, Daniel},
      author={Mann, Stephan},
       title={Geometric algebra for computer science},
   publisher={Elsevier},
        date={2007},
}

\bib{gallet15}{article}{
      author={Gallet, Matteo},
      author={Nawratil, Georg},
      author={Schicho, Josef},
       title={Bond theory for pentapods and hexapods},
        date={2015},
     journal={J. Geom.},
      volume={106},
      number={2},
       pages={211\ndash 228},
}

\bib{hegedus13:_factorization2}{article}{
      author={Hegedüs, Gábor},
      author={Schicho, Josef},
      author={Schröcker, Hans-Peter},
       title={Factorization of rational curves in the {Study} quadric and
  revolute linkages},
        date={2013},
     journal={Mech. Mach. Theory},
      volume={69},
      number={1},
       pages={142\ndash 152},
}

\bib{hegedus13:_bonds2}{article}{
      author={Hegedüs, Gábor},
      author={Schicho, Josef},
      author={Schröcker, Hans-Peter},
       title={The theory of bonds: A new method for the analysis of linkages},
        date={2013},
     journal={Mech. Mach. Theory},
      volume={70},
       pages={407\ndash 424},
}

\bib{husty12}{incollection}{
      author={Husty, Manfred},
      author={Schröcker, Hans-Peter},
       title={Kinematics and algebraic geometry},
        date={2012},
   booktitle={21st century kinematics. the 2012 {NSF} workshop},
      editor={McCarthy, J.~Michael},
   publisher={Springer},
     address={London},
       pages={85\ndash 123},
}

\bib{li18:_clifford_algebras}{article}{
      author={Li, Zijia},
      author={Scharler, Daniel~F.},
      author={Schröcker, Hans-Peter},
       title={Factorization results for left polynomials in some associative
  real algebras: {State} of the art, applications, and open questions},
        date={2019},
     journal={J. Comput. Appl. Math.},
      volume={349},
       pages={508\ndash 522},
}

\bib{li19:_motion_polynomials}{article}{
      author={Li, Zijia},
      author={Schicho, Josef},
      author={Schröcker, Hans-Peter},
       title={Factorization of motion polynomials},
        date={2019},
     journal={J. Symbolic Comput.},
      volume={92},
       pages={190\ndash 202},
}

\bib{lubbes18}{article}{
      author={Lubbes, Niels},
      author={Schicho, Josef},
       title={Kinematic generation of {Darboux} cyclides},
        date={2018},
     journal={Comput. Aided Geom. Design},
      volume={64},
       pages={11\ndash 14},
}

\bib{rad18}{article}{
      author={Rad, Tudor-Dan},
      author={Scharler, Daniel~F.},
      author={Schröcker, Hans-Peter},
       title={The kinematic image of {RR, PR, and RP} dyads},
        date={2018},
     journal={Robotica},
      volume={36},
      number={10},
       pages={1477\ndash 1492},
}

\bib{schroecker18}{incollection}{
      author={Schröcker, Hans-Peter},
      author={Pfurner, Martin},
      author={Schadlbauer, Josef},
       title={Constraint varieties in mechanism science},
        date={2018},
   booktitle={Handbook of geometric constraint systems principles},
      editor={Sitharam, Meera},
      editor={St.~John, Audrey},
      editor={Sidman, Jessica},
      series={Discrete Mathematics and Its Applications},
   publisher={Chapman and Hall/CRC},
}

\bib{selig00}{article}{
      author={Selig, Jon~M.},
       title={Clifford algebra of points, lines and planes},
        date={2000},
     journal={Robotica},
      volume={18},
      number={5},
       pages={545–556},
}

\bib{selig05}{book}{
      author={Selig, Jon~M.},
       title={Geometric fundamentals of robotics},
     edition={2},
      series={Monographs in Computer Science},
   publisher={Springer},
        date={2005},
}

\bib{selig12}{incollection}{
      author={Selig, Jon~M.},
       title={Some rigid-body constraint varieties generated by linkages},
        date={2012},
   booktitle={Latest advances in robot kinematics},
      editor={Lenar\v{c}i\v{c}, J.},
      editor={Husty, M.},
   publisher={Springer},
     address={Dordrecht},
}

\bib{sommer2013geometric}{book}{
      author={Sommer, Gerald},
       title={Geometric computing with clifford algebras: theoretical
  foundations and applications in computer vision and robotics},
   publisher={Springer Science \& Business Media},
        date={2013},
}

\end{biblist}
\end{bibdiv}
 
\end{document}